\theoremstyle{plain}
\newtheorem{thm}{Theorem}[section]
\newtheorem*{thm*}{Theorem}
\newtheorem{prop}[thm]{Proposition}
\theoremstyle{remark}
\newcommand{\vol}{\operatorname{vol}}
\newcommand{\diam}{\operatorname{Diam}}
\title[Two new universal inequalities]{Two new universal inequalities for Neumann eigenvalues of the Laplacian on a planar convex domain}
\author{Kei Funano}
\address{Division of Mathematics \& Research Center for Pure and Applied Mathematics, Graduate School of Information Sciences, Tohoku University, 6-3-09 Aramaki-Aza-Aoba, Aoba-ku, Sendai 980-8579, Japan}
\email{kfunano@tohoku.ac.jp}
\subjclass[2010]{35P15, 53C23, 58J50}
\keywords{Eigenvalues of the Laplacian; Neumann boundary condition; Universal inequality; Domain monotonicity; Convex domain}
\date{\today}
\begin{document}
\maketitle

\begin{abstract}
We establish two new universal inequalities for Neumann eigenvalues of the Laplacian on a planar convex domain.

\medskip

\noindent\textsc{R\'esum\'e.} Nous \'etablissons deux nouvelles in\'egalit\'es universelles pour les valeurs propres de Neumann du laplacien sur un domaine convexe plan.
\end{abstract}






\section{Introduction}
Let $\Omega$ be a bounded domain in $\mathbb{R}^n$ with Lipschitz boundary. We consider the following Neumann eigenvalue problem of the Laplacian: 
\begin{equation}\label{Neumann eigenvalue problem}
\begin{cases}
-\Delta u = \mu u \quad \text{in } \Omega,\\
\partial_n u=0\quad \text{on }\partial \Omega,
\end{cases}
\end{equation}
where $\partial_n$ denotes the directional derivative with respect to $n$, the outward unit normal vector to $\partial\Omega$.
In this situation the compactness of the embedding $H^1(\Omega) \subset L^2(\Omega)$ implies that the eigenvalues of (\ref{Neumann eigenvalue problem}) can be listed with multiplicity as follows: 
\begin{align*}
     0=\mu_0(\Omega)<\mu_1(\Omega)\le\mu_2(\Omega)\le \dots  \le \mu_k(\Omega)\le \mu_{k+1}(\Omega)\le \dots \to\infty.
\end{align*}

Our main concern in this paper is the relation between those eigenvalues. One of the main theorems in this paper is the following. We denote by $\alpha\lesssim \beta$ if $\alpha\leq c\beta$ for some universal concrete constant $c>0$ (which means $c$ does not depend on any parameters such as dimension and $k$ etc).
\begin{thm}\label{MTHM}Let $\Omega$ be a planar bounded convex domain. Then for any $k\geq l$ we have
    \begin{align*}
        \mu_k(\Omega)\lesssim \Big(\frac{k}{l}\Big)^{2}\mu_l(\Omega).
    \end{align*}
\end{thm}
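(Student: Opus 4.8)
The plan is to prove the theorem by trapping each Neumann eigenvalue between two explicit geometric quantities. Write $d=\diam\Omega$ and $A=|\Omega|$. I claim that there are universal constants $0<c\le C$ such that, for every planar bounded convex $\Omega$ and every $k\ge 1$,
\begin{equation*}
c\,\min\Bigl(\frac{k^{2}}{d^{2}},\frac{k}{A}\Bigr)\ \le\ \mu_k(\Omega)\ \le\ C\,\min\Bigl(\frac{k^{2}}{d^{2}},\frac{k}{A}\Bigr).
\end{equation*}
Granting this, the theorem is an elementary computation. Put $r=d^{2}/A$, and note that $\min(x^{2}/d^{2},x/A)$ equals $x^{2}/d^{2}$ when $0<x\le r$ and equals $x/A$ when $x\ge r$. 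If $l\le k\le r$, both minima are the first terms and $\mu_k/\mu_l\le(C/c)(k/l)^{2}$; if $r\le l\le k$, both are the second terms and $\mu_k/\mu_l\le(C/c)(k/l)\le(C/c)(k/l)^{2}$; and if $l\le r\le k$ then $\mu_k/\mu_l\le(C/c)\dfrac{k/A}{l^{2}/d^{2}}=(C/c)\dfrac{kr}{l^{2}}\le(C/c)(k/l)^{2}$ since $r\le k$. These three cases exhaust the possibilities for $l\le k$, so $\mu_k(\Omega)\lesssim(k/l)^{2}\mu_l(\Omega)$.

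For the upper bound I treat the two halves of the minimum separately. To get $\mu_k(\Omega)\lesssim k^{2}/d^{2}$, let $v$ be a unit vector along a diameter; then $x\mapsto\langle x,v\rangle$ projects $\Omega$ onto an interval of length $d$, and the slice length $h(t)=\bigl|\Omega\cap\{\langle x,v\rangle=t\}\bigr|$ is a nonnegative concave function on that interval. Splitting the interval into $k+1$ equal pieces $J_0,\dots,J_k$ and placing a piecewise linear tent $\psi_j$ over each yields $k+1$ test functions $\psi_j\circ\langle\,\cdot\,,v\rangle\in H^{1}(\Omega)$ with pairwise disjoint supports; each has Rayleigh quotient $\lesssim k^{2}/d^{2}$, the only nontrivial point being the lower bound $\int\psi_j^{2}\,h\gtrsim\int_{J_j}h$, which follows from the elementary fact that a nonnegative concave function on an interval puts at least a fixed proportion of its integral on the middle third (being concave, it lies above the two chords drawn from its maximum). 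The min-max principle then gives $\mu_k(\Omega)\lesssim k^{2}/d^{2}$. The complementary estimate $\mu_k(\Omega)\lesssim k/A$ I would take directly from Kr\"oger's upper bound for Neumann eigenvalues of a convex Euclidean domain, which in the plane reads $\mu_k(\Omega)\lesssim k/|\Omega|$.

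The core of the argument is the lower bound, which I would prove by a partition of $\Omega$ combined with the Payne--Weinberger inequality $\mu_1(K)\ge\pi^{2}/\diam(K)^{2}$ for convex $K$. When $k$ exceeds an absolute constant (for smaller $k$ the bound is immediate from $\mu_k\ge\mu_1\ge\pi^{2}/d^{2}$ and $\min(k^{2}/d^{2},k/A)\le k^{2}/d^{2}\lesssim d^{-2}$), set $s\asymp\max(d/k,\sqrt{A/k})$ with a sufficiently large universal constant and overlay on the plane a grid of axis-parallel squares of side $s$. The squares that meet $\Omega$ cut it into at most
\begin{equation*}
\frac{A}{s^{2}}+\frac{2\,\mathrm{Per}(\Omega)}{s}+O(1)\ \lesssim\ \frac{A}{s^{2}}+\frac{d}{s}+1\ \le\ k
\end{equation*}
convex subdomains --- here one uses only that a convex set of diameter $d$ has perimeter at most $2\pi d$ --- and each of these pieces has diameter at most $s\sqrt{2}$. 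By Neumann--Neumann bracketing, $\mu_k(\Omega)$ is at least the $k$-th eigenvalue of the disjoint union of these pieces (in the convention $\mu_0=0$); since there are at most $k$ pieces, that eigenvalue is a \emph{positive} Neumann eigenvalue of one of them, hence at least $\pi^{2}/(s\sqrt2)^{2}=\pi^{2}/(2s^{2})$ by Payne--Weinberger. Therefore $\mu_k(\Omega)\gtrsim 1/s^{2}=\min(k^{2}/d^{2},k/A)$.

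I expect the main difficulty to be the lower bound, and in particular seeing that the right scale is $s\asymp\max(d/k,\sqrt{A/k})$: one has to recognize that \emph{both} the diameter obstruction (a partition needs $\gtrsim d/s$ pieces along the long direction) and the area obstruction (it needs $\gtrsim A/s^{2}$ pieces) are simultaneously in force, and that the crude grid count reproduces exactly their maximum, which is what makes the crossover in the two-sided estimate come out correctly. A companion subtlety is the asymmetry between the two sides: Payne--Weinberger applies to every convex piece whatever its shape, so thin slivers pressed against the cutting lines are harmless for the lower bound, whereas trying to prove $\mu_k\lesssim k/A$ by supporting test functions on those same pieces would be wrecked by such slivers --- which is precisely why that half of the upper bound has to be imported from Kr\"oger's inequality instead.
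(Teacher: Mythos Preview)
Your argument is correct and uses the same essential ingredients as the paper --- Kr\"oger's diameter and volume upper bounds, a convex partition, and Payne--Weinberger on the pieces --- but it is organized quite differently. The paper first reduces to a rectangle via John's theorem and the domain-monotonicity estimate $\mu_k(\Omega')\lesssim n^2\mu_k(\Omega)$, then sets $R=Ck/(l\sqrt{\mu_k})$ and splits into two cases according to whether the short side is $\le 2R$ (slice along the long side) or $>2R$ (pack disjoint $R$-balls, count them via Kr\"oger's volume bound, and take Voronoi cells). You instead work directly with $\Omega$, prove the two-sided estimate $\mu_k(\Omega)\asymp\min(k^2/d^2,\,k/A)$, and read off the ratio bound by elementary case analysis. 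Your grid partition handles both regimes at once, and the perimeter count $\mathrm{Per}(\Omega)\lesssim d$ for convex sets replaces the paper's rectangle reduction; this lets you dispense with Hatcher's approximation and the domain-monotonicity theorem entirely. The price is that you must argue the upper bound $\mu_k\lesssim k^2/d^2$ by hand with tent functions (the paper simply cites Kr\"oger), and your concavity step ``the middle third carries a fixed fraction of the integral'' deserves one more line --- for instance, concavity gives $\int_0^1 h\le h(1/2)$ while $h\ge h(1/2)/2$ on $[1/4,3/4]$, so $\int_{1/4}^{3/4}h\ge\tfrac14\int_0^1 h$. A bonus of your route is that the two-sided Weyl-type estimate $\mu_k(\Omega)\asymp\min(k^2/d^2,k/A)$ is an explicit statement of independent interest that the paper's proof does not isolate.
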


Let us mention the related known inequalities between the eigenvalues.
In \cite[Theorem 1.1]{Liu}, Liu showed the sharp dimension-free universal inequality $\mu_k(\Omega)\lesssim k^2\mu_1(\Omega)$ for any bounded convex domain $\Omega$ in $\mathbb{R}^n$.
In \cite[Theorem 1.2]{F2} the author proved that 
\begin{align}\label{univ}
    \mu_{k+1}(\Omega)\lesssim n^4\mu_k(\Omega)
\end{align}under the same situation of that of Liu's. The above theorem unifies the Liu inequality and the inequality (\ref{univ}) in the planar case.

We also obtain a reverse inequality: 
\begin{thm}\label{MTHM2}
    Let $\Omega$ be a planar bounded convex domain. Then for any $k\geq l$ we have
    \begin{align*}
        \mu_k(\Omega)\gtrsim \frac{k}{l} \mu_l(\Omega).
    \end{align*}
\end{thm}

In \cite[Theorem 1.1 and Remark 1.1]{F3} building on theorems by Milman (\cite{Mi}) and Cheng-Li (\cite{CL}) the author showed the sharp universal inequality $\mu_k(\Omega)\gtrsim k^{\frac{2}{n}}\mu_1(\Omega)$ for any bounded convex domain $\Omega$ in $\mathbb{R}^n$. The above theorem generalizes this inequality in the planar case. 

In the proof of the above main theorems we use a variant of domain monotonicity for eigenvalues to reduce the problem to the case of rectangular domains. In this aspect we give a new brief proof for a weaker version of the inequality (\ref{univ}) in the last section (see Theorem \ref{Weaker ineq}). It is weaker but its proof is new and rather shorter than the proof of the inequality (\ref{univ}) so the author includes it in this paper.
\section{Preliminaries}

We recall several results which will be used in the proof of main theorems.

In the following proposition Hatcher showed that bounded convex domains can be approximated by rectangular domains in some sense. He used the John theorem which asserts that bounded convex domains can be approximated by ellipsoids (\cite[Theorem III]{J}). 

\begin{prop}[{\cite[Proposition 2.3]{H}}]\label{Hacher lem} Let $\Omega$ be a bounded convex domain in $\mathbb{R}^n$. Choosing the coordinate axes appropriately if necessary we can find a sequence $a_1,a_2,\cdots, a_n$ of positive real numbers such that  
    \begin{align*}
       \Big[-\frac{a_1}{\sqrt{n}},\frac{a_1}{\sqrt{n}}\Big]\times \cdots \times \Big[-\frac{a_n}{\sqrt{n}},\frac{a_n}{\sqrt{n}}\Big]\subseteq  \Omega\subseteq [-a_1 n, a_1 n]\times \cdots [-a_n n, a_n n].
    \end{align*}
\end{prop}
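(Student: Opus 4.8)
The plan is to deduce the proposition directly from F.~John's theorem on the maximal-volume inscribed ellipsoid, which is the tool the statement advertises. Recall John's theorem in the form: a bounded convex domain $\Omega\subseteq\mathbb{R}^n$ contains a unique ellipsoid $E$ of maximal volume, and if $E$ is centered at the origin then $E\subseteq\Omega\subseteq nE$, where $nE$ is the dilation of $E$ by the factor $n$ about its center (the factor improves to $\sqrt n$ when $\Omega$ is centrally symmetric, but we do not need this). So the first step is to translate $\Omega$ so that its John ellipsoid $E$ is centered at the origin, and then — the one point not literally named by the word ``translating'' in the statement, but harmless here and implicitly allowed, since a rotation does not affect the Neumann spectrum in the applications — to apply an orthogonal change of coordinates putting the principal axes of $E$ along the coordinate axes, so that $E=\{\,x\in\mathbb{R}^n:\sum_{i=1}^n (x_i/a_i)^2\le 1\,\}$ for suitable semi-axis lengths $a_1,\dots,a_n>0$.

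With $E$ in this diagonal form, the second step is the elementary observation that $E$ is sandwiched between two coordinate boxes. If $|x_i|\le a_i/\sqrt n$ for every $i$, then $\sum_i (x_i/a_i)^2\le\sum_i 1/n=1$, so the box $[-a_1/\sqrt n,a_1/\sqrt n]\times\cdots\times[-a_n/\sqrt n,a_n/\sqrt n]$ is contained in $E$. Conversely, every point of the dilated ellipsoid $nE=\{\,x:\sum_i (x_i/(na_i))^2\le 1\,\}$ satisfies $|x_i|\le na_i$ for each $i$, hence $nE\subseteq[-na_1,na_1]\times\cdots\times[-na_n,na_n]$.

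The final step is to chain these inclusions with John's sandwich $E\subseteq\Omega\subseteq nE$:
\[
\Big[-\frac{a_1}{\sqrt n},\frac{a_1}{\sqrt n}\Big]\times\cdots\times\Big[-\frac{a_n}{\sqrt n},\frac{a_n}{\sqrt n}\Big]\subseteq E\subseteq\Omega\subseteq nE\subseteq[-na_1,na_1]\times\cdots\times[-na_n,na_n],
\]
which is exactly the asserted double inclusion for this choice of $a_1,\dots,a_n$. There is no genuine obstacle once John's theorem is granted; the only items requiring care are the precise John constant (the factor $n$ for a general convex body) and the bookkeeping that, besides the translation named in the statement, an orthogonal rotation is used to diagonalize $E$ — innocuous for everything that follows.
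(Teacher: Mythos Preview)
Your proof is correct and follows exactly the approach the paper indicates: the paper does not supply its own proof of this proposition but cites Hatcher and remarks that the argument rests on John's ellipsoid theorem, which is precisely the tool you invoke and chain with the elementary box--ellipsoid inclusions. Your observation that an orthogonal rotation (not only a translation) is needed to align the John ellipsoid with the coordinate axes is a fair reading of a slight imprecision in the statement, and, as you note, it is harmless for the spectral applications in the paper.
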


Combining the above proposition with the next theorem we see that the proof of the main theorem reduces to the case where the domain is rectanglular.
It is a variant of domain monotonicity, it states that the smaller domain has a larger eigenvalue up to a multiplicative constant factor under convexity assumption.
\begin{thm}[{\cite[Theorem 1.1]{F}}]\label{domain monotonicity}For any two
bounded convex domains $\Omega\subseteq \Omega'$ in $\mathbb{R}^n$ their Neumann eigenvalues of the Laplacian satisfy
    \begin{align*}
        \mu_k(\Omega')\lesssim n^2\mu_k(\Omega).
    \end{align*}
\end{thm}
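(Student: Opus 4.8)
The plan is to bound $\mu_k(\Omega')$ from above by exhibiting a good $(k+1)$-dimensional family of trial functions on $\Omega'$, obtained by extending Neumann eigenfunctions of $\Omega$. Concretely, let $u_0\equiv 1,u_1,\dots,u_k$ be $L^2(\Omega)$-orthogonal eigenfunctions for $\mu_0(\Omega)\le\dots\le\mu_k(\Omega)$, and suppose one has a linear extension operator $E\colon H^1(\Omega)\to H^1(\Omega')$ with $(Eu)|_\Omega=u$ and the gradient bound $\int_{\Omega'}|\nabla Eu|^2\lesssim n^2\int_\Omega|\nabla u|^2$ for all $u$. Put $V:=\operatorname{span}\{Eu_0,\dots,Eu_k\}\subseteq H^1(\Omega')$; since $E$ restricts to the identity on $\Omega$ we get $\dim V=k+1$. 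For $u=\sum_i c_iu_i$, orthogonality in $L^2(\Omega)$ and in the Dirichlet form gives $\int_\Omega|\nabla u|^2=\sum_ic_i^2\mu_i(\Omega)\|u_i\|_{L^2(\Omega)}^2\le\mu_k(\Omega)\|u\|_{L^2(\Omega)}^2$, while $\int_{\Omega'}|Eu|^2\ge\int_\Omega|Eu|^2=\|u\|_{L^2(\Omega)}^2$ because $\Omega\subseteq\Omega'$. Hence every $v\in V\setminus\{0\}$ has Rayleigh quotient $\lesssim n^2\mu_k(\Omega)$ on $\Omega'$, and the min--max principle yields $\mu_k(\Omega')\lesssim n^2\mu_k(\Omega)$. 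So the whole statement reduces to constructing such an $E$.

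The first construction I would try is composition with the nearest-point (metric) projection: since $\Omega$ is convex, $\pi\colon\Omega'\to\overline\Omega$ is $1$-Lipschitz, so $Eu:=u\circ\pi$ satisfies $(Eu)|_\Omega=u$ and $|\nabla Eu|\le|\nabla u|\circ\pi$ a.e.; it then remains to estimate $\int_{\Omega'}|\nabla u|^2\circ\pi$. Parametrising $\Omega'\setminus\Omega$ by the outward normal exponential map of $\partial\Omega$ and using that the principal curvatures of $\partial\Omega$ are nonnegative (convexity), one can feed into the area formula the Jacobian of this parametrisation together with the fact that the tangential part of $D\pi$ is contracted by the factors $(1+s\kappa_i)^{-1}$ along the normal segments. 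The delicate point — and, I expect, the main obstacle — is that the resulting bound must depend on $n$ alone and must not see the (possibly enormous) diameter of $\Omega'$; the crude projection estimate by itself degrades with the size of $\Omega'$, and that has to be repaired.

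To force a purely dimensional constant I would bring in the convex geometry through Hatcher's Proposition~\ref{Hacher lem} (and, behind it, John's theorem): sandwich $\Omega$ between two boxes that differ only by a dilation of ratio $n^{3/2}$, and likewise $\Omega'$, thereby reducing the comparison to nested rectangular configurations. For rectangular domains the Neumann spectrum is explicit — products of one-dimensional spectra, equivalently lattice-point counts in ellipsoids, which behave monotonically under inclusion — and a dilation by $\lambda$ multiplies each $\mu_k$ by $\lambda^{-2}$; combining this box comparison with the projection extension on the normalised configuration should produce an $E$ with the required $O(n^2)$ gradient bound. Thus the substance of the argument is exactly the bookkeeping that keeps every constant a function of the dimension only, independent of the relative sizes and shapes of $\Omega$ and $\Omega'$, which is precisely where the convex-geometric normalisation does the work.
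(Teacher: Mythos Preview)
This theorem is only quoted from \cite{F}; the present paper contains no proof of it, so there is nothing in-paper to compare against. Your min--max reduction to a linear extension $E$ with $\int_{\Omega'}|\nabla Eu|^2\lesssim n^2\int_\Omega|\nabla u|^2$ is correct, and you rightly flag that the choice $Eu=u\circ\pi$ does not deliver this bound: with $\Omega=[0,1]^2\subseteq\Omega'=[0,N]\times[0,1]$ and $u(x,y)=\cos\pi x\cos\pi y$, on $\{x>1\}$ one has $u\circ\pi=-\cos\pi y$, giving $\int_{\Omega'}|\nabla(u\circ\pi)|^2\ge(N-1)\pi^2/2$ against $\int_\Omega|\nabla u|^2=\pi^2/2$. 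The curvature factors $(1+s\kappa_i)^{-1}$ contract the tangential differential of $\pi$ but cannot offset the unbounded length of the normal rays.

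The gap is that your John/Hatcher repair does not remove this obstruction. Applying Proposition~\ref{Hacher lem} to $\Omega$ normalises the \emph{shape} of $\Omega$ but says nothing about the ratio $\diam\Omega'/\diam\Omega$, which is exactly the source of the blow-up: after the reduction you still face an inclusion $R_1\subseteq\Omega'$ with $R_1$ a fixed box and $\Omega'$ arbitrarily large, and the counterexample above applies verbatim. Applying Proposition~\ref{Hacher lem} to $\Omega'$ as well does not help either, since its boxes are in general not axis-aligned with those of $\Omega$, so the explicit rectangle spectrum and the lattice-point monotonicity you invoke are unavailable. Nothing in your sketch supplies a mechanism that is insensitive to how large $\Omega'$ is relative to $\Omega$, and that is precisely the missing idea.
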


For a convex domain, Neumann eigenvalues of the Laplacian are related with the diameter as follows:
\begin{thm}[{Payne–Weinberger, \cite[(1.9)]{PW}}]\label{Diamthm}Suppose that $\Omega$ is a bounded convex domain in $\mathbb{R}^n$. Then we have $\mu_1(\Omega)\geq \frac{\pi^2}{(\diam \Omega)^2}$.
\end{thm}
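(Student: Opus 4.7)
The plan is to reduce the $n$-dimensional Neumann--Poincar\'e inequality to its sharp one-dimensional counterpart by means of a needle (bisection) decomposition of the convex body $\Omega$. By the Rayleigh quotient characterization, $\mu_1(\Omega)$ equals the infimum of $\int_\Omega |\nabla u|^2/\int_\Omega u^2$ over $u\in H^1(\Omega)\setminus\{0\}$ with $\int_\Omega u=0$, so I would fix such a test function $u$ and aim to establish
\[
\int_\Omega |\nabla u|^2 \ge \frac{\pi^2}{(\diam \Omega)^2}\int_\Omega u^2.
\]
The one-dimensional fact I intend to repeatedly invoke is the sharp Neumann--Poincar\'e inequality on an interval: for every $f\in H^1([0,L])$ with $\int_0^L f=0$ one has $\int_0^L (f')^2\ge (\pi/L)^2\int_0^L f^2$, with equality at $f(x)=\cos(\pi x/L)$.

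Next I would construct a bisection/needle decomposition. The core observation is that if $\Omega$ is convex and $\int_\Omega u=0$, then for any unit direction $v$ the function $t\mapsto \int_{\Omega\cap\{x\cdot v<t\}}u$ is continuous on $\mathbb{R}$ and vanishes at $\pm\infty$; whenever it is not identically zero, the intermediate value theorem yields a cut $\{x\cdot v=t_0\}$ dividing $\Omega$ into two convex pieces $\Omega_\pm$ on which $u$ still has integral zero. Iterating this bisection along a judiciously chosen sequence of directions and passing to a limit produces a disintegration of $\Omega$ into a family of convex needles $\{N_\alpha\}$---essentially line segments---each of length at most $\diam \Omega$ and carrying an induced conditional measure with respect to which $u|_{N_\alpha}$ still has mean zero.

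With the decomposition in hand, the conclusion becomes a single integration. Parametrizing each needle by arclength and writing $\partial_s u$ for the tangential derivative (which satisfies $|\partial_s u|\le |\nabla u|$), the 1D Poincar\'e inequality on $N_\alpha$ of length $L_\alpha\le \diam \Omega$ gives
\[
\int_{N_\alpha}(\partial_s u)^2\ge \frac{\pi^2}{L_\alpha^2}\int_{N_\alpha}u^2\ge \frac{\pi^2}{(\diam \Omega)^2}\int_{N_\alpha}u^2,
\]
and integrating this against the disintegration measure while using $|\partial_s u|^2\le |\nabla u|^2$ collapses to the desired $n$-dimensional bound.

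The main obstacle I anticipate is the rigorous construction of the needle decomposition: maintaining convexity of the pieces through iterated bisection, executing the limiting procedure so that the pieces become genuinely one-dimensional, and verifying that the disintegration of Lebesgue measure is compatible with preservation of the mean-zero condition on $u$ at every stage. This is precisely the substance of the Payne--Weinberger bisection argument (later reformulated as the Kannan--Lov\'asz--Simonovits localization lemma); once it is available, the surrounding steps---Rayleigh quotient reformulation and application of the sharp cosine inequality on each needle---are routine.
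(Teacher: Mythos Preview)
The paper does not supply a proof of this theorem; it is quoted in the Preliminaries as a known result with a citation to Payne--Weinberger, so there is nothing in the paper to compare against.

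That said, your outline contains a genuine gap worth flagging. The needle decomposition you describe does not produce one-dimensional segments equipped with Lebesgue (arclength) measure: the conditional measure on each needle $N_\alpha$ carries a density $w_\alpha$ coming from the $(n-1)$-dimensional cross-sectional volumes of the convex pieces, and by Brunn--Minkowski this density is log-concave but in general nonconstant. Consequently the sharp inequality you quote at the outset,
\[
\int_0^L (f')^2 \ge \frac{\pi^2}{L^2}\int_0^L f^2 \quad\text{for } \int_0^L f = 0,
\]
is not the one you actually need on $N_\alpha$; what is required is the \emph{weighted} statement
\[
\int_0^L (f')^2\, w \ge \frac{\pi^2}{L^2}\int_0^L f^2\, w \quad\text{for } \int_0^L f\, w = 0,\ w \text{ log-concave},
\]
and this is itself a nontrivial theorem---indeed it is the analytic heart of the Payne--Weinberger paper, established there by a further ODE argument. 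Your sketch treats this step as ``routine'' once the decomposition is in hand, but without it the proof does not close; the reduction to one dimension only trades the original problem for an equally hard (if more structured) weighted one-dimensional problem.
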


In the following theorem $j_{\nu,k}$ denotes the $k$-th positive zero of the Bessel function $J_{\nu}$ of order $\nu$.
\begin{thm}[{Kr\"oger, \cite[Theorem 1]{Kr}}]\label{Kr ineq}
    Let $\Omega$ be a bounded convex domain in $\mathbb{R}^n$. Assume that $n>2$. Then we have
    \begin{enumerate}
        \item $(\diam \Omega)^2\mu_k(\Omega)\leq 4j_{\frac{n-2}{2},\frac{k+1}{2}}^2 $ if $k$ is odd, and
        \item $(\diam \Omega)^2\mu_k(\Omega)\leq (j_{\frac{n-2}{2},\frac{k}{2}}+j_{\frac{n-2}{2},\frac{k+2}{2}})^2 $ if $k$ is even.
    \end{enumerate}
    Now assume that $n=2$. Then we have
    \begin{align*}
        (\diam \Omega)^2\mu_k(\Omega)\leq (2j_{0,1}+(k-1)\pi)^2\lesssim k^2.
    \end{align*}
\end{thm}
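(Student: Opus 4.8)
The plan is to bound $\mu_k(\Omega)$ from above by the min-max principle, using trial functions that depend only on the coordinate along a diameter of $\Omega$. This reduces the problem to a weighted one-dimensional Sturm--Liouville problem on an interval of length $\diam\Omega$; it is the Payne--Weinberger slicing philosophy pushed from $\mu_1$ to $\mu_k$.

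Concretely, I would first fix a segment realizing the diameter, arrange coordinates so that it is $[0,d]\times\{0\}$ with $d=\diam\Omega$, and set $A(t)=\mathcal{H}^{n-1}(\{x\in\Omega:x_1=t\})$ for $t\in[0,d]$. By the Brunn--Minkowski inequality $A^{1/(n-1)}$ is concave on $[0,d]$, and when $n=2$ the function $A$ itself is concave. For a function $f$ on $[0,d]$ the trial function $u(x)=f(x_1)$ satisfies
\begin{align*}
\int_\Omega|\nabla u|^2=\int_0^d f'(t)^2 A(t)\,dt,\qquad \int_\Omega u^2=\int_0^d f(t)^2 A(t)\,dt,
\end{align*}
so its Rayleigh quotient equals that of the weighted problem $-(Af')'=\lambda Af$. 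Now choose a partition $0=t_0<t_1<\cdots<t_{k+1}=d$ and, on the $i$-th subinterval $I_i=[t_i,t_{i+1}]$, let $f_i$ be the lowest eigenfunction of this weighted problem with the natural (Neumann) condition at whichever of $t=0,d$ lies in $I_i$ and the Dirichlet condition at the interior partition points; extend $f_i$ by zero. The resulting $u_i$ lie in $H^1(\Omega)$, are linearly independent, and have pairwise disjoint supports, so the $(k+1)$-dimensional span $V=\mathrm{span}(u_0,\dots,u_k)$ has $\sup_{V\setminus 0}\big(\int|\nabla u|^2/\int u^2\big)=\max_i\lambda(I_i)$, whence $\mu_k(\Omega)\le\max_i\lambda(I_i)$.

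It then remains to bound each $\lambda(I_i)$ by an \emph{extremal} weight and to balance the partition. On a subinterval on which $A$ stays positive (and, for $n=2$, concave), a comparison of Payne--Weinberger type shows $\lambda(I_i)\le\pi^2/|I_i|^2$, the value for the constant weight, with eigenfunction $\sin(\pi(t-t_i)/|I_i|)$. On a subinterval containing $t=0$ or $t=d$, where $A$ may degenerate, the extremal admissible profile is the ``cone'' weight $A(t)\propto t^{\,n-1}$ (mass pushed away from the degenerate Neumann end); the associated mixed problem is the radial Bessel equation on a ball, giving $\lambda(I_i)\le j_{(n-2)/2,1}^2/|I_i|^2$. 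For $n=2$ one chooses the $k+1$ intervals so that the two end pieces contribute $j_{0,1}^2/|I_0|^2=j_{0,1}^2/|I_k|^2$ and the $k-1$ interior pieces contribute $\pi^2/|I_i|^2$, all equal to a common value $\Lambda$; the constraint $\sum_i|I_i|=d$ then forces $\sqrt\Lambda\,d=2j_{0,1}+(k-1)\pi$, which is the claimed inequality, and since $j_{0,1}$ and $\pi$ are absolute constants this is $\lesssim k^2$. For $n\ge 3$ the interior ``cosine'' bound fails, so instead one uses a two-block partition $[0,a]\cup[a,d]$, places the first $m_1$ and $m_2$ radial eigenfunctions of the ball on the two blocks with $m_1+m_2=k+1$ and $m_1,m_2$ as equal as possible, and equalizes $j_{(n-2)/2,m_1}/a=j_{(n-2)/2,m_2}/(d-a)$: this yields $\Lambda=4j_{(n-2)/2,(k+1)/2}^2/d^2$ when $k$ is odd ($m_1=m_2$) and $\Lambda=(j_{(n-2)/2,k/2}+j_{(n-2)/2,(k+2)/2})^2/d^2$ when $k$ is even ($m_1=k/2$, $m_2=(k+2)/2$), matching the two cases of the statement.

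The technical heart, and the step I expect to be the main obstacle, is the one-dimensional comparison lemma: among weights $A$ with $A^{1/(n-1)}$ concave (and with prescribed degeneration at an endpoint), the relevant low eigenvalue of $-(Af')'=\lambda Af$ is maximized by the cone weight $t^{\,n-1}$, and on an interior block a concave weight is dominated, for this Dirichlet eigenvalue, by the constant weight. These are Sturm--Liouville monotonicity facts in the spirit of Payne--Weinberger's lemma: one shows that redistributing the weight toward a degenerate endpoint cannot decrease the eigenvalue, using the sign structure of the corresponding eigenfunction together with an integration-by-parts argument. I would isolate this as a self-contained lemma on one-dimensional eigenvalue problems and prove it by a deformation argument on the convex set of admissible weights, checking that the eigenvalue varies monotonically as the weight is deformed to its extremal profile; the comparison of Bessel zeros with $\pi$ (so that $j_{0,1}>\pi/2$, etc., and the final constant is indeed of order $k^2$) is then elementary.
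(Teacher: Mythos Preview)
The paper does not contain a proof of this theorem; it is quoted from Kr\"oger \cite{Kr} in the preliminaries and used only as a black box in the proof of Theorem~\ref{MTHM}. There is therefore nothing in the present paper to compare your proposal against.

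That said, your outline is a faithful reconstruction of how such a bound is obtained, and the formulas you arrive at match the statement exactly: $k+1$ disjointly supported test functions along a diameter, min--max, and a one-dimensional comparison to an extremal (cone or constant) weight. Your identification of the weighted Sturm--Liouville comparison as the technical heart is accurate. One small remark on that step: for the interior blocks in dimension $2$, the claim that a positive concave weight $A$ on $[0,L]$ gives first weighted Dirichlet eigenvalue at most $\pi^2/L^2$ follows immediately by testing with $f(t)=\sin(\pi t/L)$ once one checks $\int_0^L A(t)\cos(2\pi t/L)\,dt\le 0$; and this is just two integrations by parts, since the second antiderivative $\Phi(t)=(L/2\pi)^2(1-\cos(2\pi t/L))$ is nonnegative and vanishes to first order at both endpoints, giving $\int A\cos=\int A''\,\Phi\le 0$. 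So that piece is easier than your deformation sketch suggests. The endpoint (Bessel) comparison, especially for $n\ge 3$, is where the genuine work lies, and for that you should consult \cite{Kr} directly rather than redevelop it.
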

Kr\"{o}ger proved the following upper bound for Neumann eigenvalues in terms of volume using harmonic analysis. 
\begin{thm}[{Kr\"{o}ger, \cite[Corollary 2]{Kr2}}]\label{Kr ineq2}Let $\Omega$ be a (not necessarily convex) bounded domain in $\mathbb{R}^n$ with Lipschitz boundary. Then for any natural number $k$ we have
\begin{align*}
    \mu_k(\Omega)\leq (2\pi)^2\Big(\frac{n+2}{2}\Big)^{\frac{2}{n}}\Big(\frac{k}{\omega_n \vol \Omega}\Big)^{\frac{2}{n}}\lesssim \Big(\frac{k}{\omega_n \vol \Omega}\Big)^{\frac{2}{n}},
\end{align*}where $\omega_n$ is the volume of a unit ball in $\mathbb{R}^n$.
\end{thm}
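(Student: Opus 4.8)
\medskip\noindent\emph{Proof proposal.} The plan is to use Kr\"oger's Fourier-analytic method, testing the Neumann quadratic form of $\Omega$ against the exponentials $e_\xi(x):=e^{i\langle\xi,x\rangle}$, $\xi\in\mathbb R^n$. Fix an $L^2(\Omega)$-orthonormal basis $\{u_j\}_{j\ge0}$ of real Neumann eigenfunctions, $-\Delta u_j=\mu_j u_j$, with $0=\mu_0\le\mu_1\le\cdots$. Since $\Omega$ is bounded, every $e_\xi$ belongs to $C^\infty(\overline\Omega)\subseteq H^1(\Omega)$, so we may expand $e_\xi=\sum_{j\ge0}c_j(\xi)u_j$ in $L^2(\Omega)$ with $c_j(\xi)=\int_\Omega e^{i\langle\xi,x\rangle}u_j(x)\,dx$. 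Because $H^1(\Omega)$ is the form domain of the Neumann Laplacian and the $u_j$ are an eigenbasis of the associated form, the quadratic-form spectral theorem gives $\int_\Omega|\nabla e_\xi|^2\,dx=\sum_{j\ge0}\mu_j|c_j(\xi)|^2$. Combining this with the elementary evaluations $\int_\Omega|e_\xi|^2\,dx=\vol\Omega$ and $\int_\Omega|\nabla e_\xi|^2\,dx=|\xi|^2\vol\Omega$ yields, for each $\xi\in\mathbb R^n$,
\begin{align*}
\sum_{j\ge0}|c_j(\xi)|^2=\vol\Omega,\qquad \sum_{j\ge0}\mu_j|c_j(\xi)|^2=|\xi|^2\vol\Omega.
\end{align*}

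Next I would integrate both identities over the ball $B_R=\{\xi\in\mathbb R^n:|\xi|\le R\}$ and set $F_j:=\int_{B_R}|c_j(\xi)|^2\,d\xi\ge0$. Using $\vol(B_R)=\omega_nR^n$, $\int_{B_R}|\xi|^2\,d\xi=\tfrac{n}{n+2}\omega_nR^{n+2}$, and monotone convergence to exchange sum and integral, this gives
\begin{align*}
\sum_{j\ge0}F_j=\omega_nR^n\vol\Omega=:M,\qquad \sum_{j\ge0}\mu_jF_j=\frac{n}{n+2}R^2M.
\end{align*}
The remaining ingredient is the uniform bound $F_j\le(2\pi)^n$: extending $u_j$ by $0$ outside $\Omega$, the function $\xi\mapsto c_j(\xi)$ is (up to the choice of Fourier normalization) the Fourier transform of $u_j\mathbf 1_\Omega\in L^2(\mathbb R^n)$, so Plancherel gives $\int_{\mathbb R^n}|c_j(\xi)|^2\,d\xi=(2\pi)^n\|u_j\|_{L^2(\Omega)}^2=(2\pi)^n$, hence $F_j\le(2\pi)^n$.

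With these in hand the conclusion comes from a bathtub-type estimate. For $j\ge k$ one has $\mu_j\ge\mu_k$, while $\sum_{j\ge k}F_j=M-\sum_{j=0}^{k-1}F_j\ge M-k(2\pi)^n$; therefore, provided $M>k(2\pi)^n$,
\begin{align*}
\frac{n}{n+2}R^2M=\sum_{j\ge0}\mu_jF_j\ge\sum_{j\ge k}\mu_jF_j\ge\mu_k\bigl(M-k(2\pi)^n\bigr),
\end{align*}
so that $\mu_k\le\frac{n}{n+2}R^2\cdot\frac{M}{M-k(2\pi)^n}$. Writing $M=\beta\,k(2\pi)^n$ with $\beta>1$, i.e. $R^2=(2\pi)^2\bigl(\beta k/(\omega_n\vol\Omega)\bigr)^{2/n}$, this reads $\mu_k\le(2\pi)^2\,\frac{n}{n+2}\cdot\frac{\beta^{1+2/n}}{\beta-1}\cdot\bigl(k/(\omega_n\vol\Omega)\bigr)^{2/n}$, and an elementary one-variable optimization shows $\beta\mapsto\frac{n}{n+2}\cdot\frac{\beta^{1+2/n}}{\beta-1}$ attains its minimum at $\beta=\frac{n+2}{2}$ with value $\bigl(\frac{n+2}{2}\bigr)^{2/n}$. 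Plugging in $\beta=\frac{n+2}{2}$ gives exactly the stated inequality; the final $\lesssim$ follows because $\bigl(\frac{n+2}{2}\bigr)^{2/n}\le\frac94$ for all $n\ge1$.

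I expect the only points requiring care to be: (i) the spectral identity $\int_\Omega|\nabla e_\xi|^2\,dx=\sum_j\mu_j|c_j(\xi)|^2$, where it matters that $e_\xi$, although it does not satisfy the Neumann condition and so lies outside the operator domain, still belongs to the form domain $H^1(\Omega)$, which is all the quadratic-form spectral theorem needs; and (ii) interchanging summation with the integral over $B_R$, which is legitimate since all terms are non-negative. The two volume integrals, the Plancherel step, and the optimization in $\beta$ are then routine.
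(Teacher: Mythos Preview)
The paper does not supply its own proof of this statement; it is quoted in the Preliminaries as a known result and attributed to Kr\"oger \cite[Corollary~2]{Kr2}. Your argument is precisely Kr\"oger's original Fourier-analytic method---testing the Neumann form against plane waves, integrating over a frequency ball, invoking Plancherel for the uniform bound $F_j\le(2\pi)^n$, and optimizing the radius---and it is correct as written, including the optimization in $\beta$ and the bound $\bigl(\tfrac{n+2}{2}\bigr)^{2/n}\le\tfrac94$ (with equality at $n=1$). So there is nothing to compare against in this paper; you have simply reproduced the cited proof.
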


Let $\Omega$ be a bounded domain in a Euclidean space and $\{\Omega_i\}_{i=1}^{k}$ be a finite partition of $\Omega$ by subdomains;
    $\Omega=\bigcup_i \Omega_i$ and $\vol (\Omega_i\cap \Omega_j)=0$ for different $i\neq j$. We need the following Buser theorem to give a lower bound of eigenvalues of the Laplacian.

\begin{thm}[{Buser, \cite[Theorem 8.2.1]{B}}]\label{Buserthm}Under the above situation we have
    \begin{align*}
        \mu_k(\Omega)\geq \min_i \mu_1(\Omega_i).
    \end{align*}
\end{thm}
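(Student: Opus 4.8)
The plan is to derive the bound directly from the variational (min--max) characterisation of Neumann eigenvalues, by producing a single test function that is spectrally small on $\Omega$ yet is forced, piece by piece, to have Rayleigh quotient at least $m:=\min_i\mu_1(\Omega_i)$. I would fix an $L^2(\Omega)$-orthonormal system $\phi_0,\dots,\phi_k$ of eigenfunctions for $\mu_0(\Omega)\le\cdots\le\mu_k(\Omega)$ and set $V:=\operatorname{span}\{\phi_0,\dots,\phi_k\}$, so that $\dim V=k+1$ and $\int_\Omega|\nabla f|^2\le\mu_k(\Omega)\int_\Omega f^2$ for every $f\in V$. Then I would look at the linear averaging map that sends $f\in V$ to the vector $\big(\int_{\Omega_i}f\big)_i$ of its integrals over the pieces of the partition: since the number of pieces is smaller than $\dim V=k+1$, this map has nontrivial kernel, so there is $f\in V$, $f\not\equiv0$, with $\int_{\Omega_i}f=0$ for every $i$.

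For such an $f$ the Poincaré characterisation of the first nonzero Neumann eigenvalue applies on each piece, giving $\int_{\Omega_i}|\nabla f|^2\ge\mu_1(\Omega_i)\int_{\Omega_i}f^2\ge m\int_{\Omega_i}f^2$. Since $\{\Omega_i\}$ is a partition of $\Omega$ up to a null set, summing over $i$ yields
\[
\int_\Omega|\nabla f|^2=\sum_i\int_{\Omega_i}|\nabla f|^2\ \ge\ m\sum_i\int_{\Omega_i}f^2=m\int_\Omega f^2,
\]
whereas $f\in V$ forces $\int_\Omega|\nabla f|^2\le\mu_k(\Omega)\int_\Omega f^2$; as $\int_\Omega f^2>0$, we get $\mu_k(\Omega)\ge m$. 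The same argument can be packaged conceptually: restriction is an inclusion $H^1(\Omega)\hookrightarrow\bigoplus_i H^1(\Omega_i)$ that preserves both the $L^2$ norm and the Dirichlet energy, so min--max gives $\mu_k(\Omega)\ge\mu_k(\bigsqcup_i\Omega_i)$, and the spectrum of the disjoint union is the union with multiplicity of the spectra of the $\Omega_i$ --- in which $0$ occurs with multiplicity equal to the number of pieces and the next eigenvalue is exactly $m$.

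I do not anticipate a genuine obstacle. The two points needing a little care are: (i) that passing to restrictions indeed preserves both the $L^2$ norm and the Dirichlet energy, which is precisely where the hypothesis $\vol(\Omega_i\cap\Omega_j)=0$ is used (it guarantees that a Sobolev function on $\Omega$ restricts to a Sobolev function on each $\Omega_i$ whose weak gradient is the almost-everywhere restriction of $\nabla f$), and (ii) that the averaging constraints can in fact be imposed, i.e.\ that the number of subdomains in the partition is at most $k$, which is the meaning of the hypothesis tying the partition to the index $k$.
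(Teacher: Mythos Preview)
The paper does not supply its own proof of this statement; it is quoted as a black box from Buser's book. Your argument---finding, by a dimension count, a nonzero $f$ in the span of the first $k+1$ Neumann eigenfunctions with $\int_{\Omega_i}f=0$ for every $i$, and then applying the Poincar\'e inequality on each piece and summing---is exactly the standard proof (and essentially Buser's original one). The disjoint-union repackaging you give at the end is the equivalent conceptual formulation.

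The only point worth flagging is the indexing, which you correctly identify in (ii). The paper's setup writes the partition as $\{\Omega_i\}_{i=0}^{l}$ while the conclusion is stated for $\mu_k$, leaving the relation between $k$ and $l$ implicit. Both your dimension argument and the disjoint-union argument require that the number of pieces be at most $k$; this is consistent with how the paper actually applies the result downstream (a partition into $l'$ pieces yielding $\mu_{l'}(\Omega)\ge\min_i\mu_1(\Omega_i)$), so ``number of pieces $\le k$'' is indeed the intended hypothesis.
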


\section{Proof of the Main Theorems}
In this section we prove the main theorems.
\begin{proof}[Proof of Theorem \ref{MTHM}]
    By virtue of Proposition \ref{Hacher lem} and Theorem \ref{domain monotonicity} we may assume that our $\Omega$ is a rectangular region, say $\Omega=[-a,a]\times [-b,b]$. Without loss of generality we may also assume that $a\leq b$.

    Put $R:=\frac{Ck}{l\sqrt{\mu_k(\Omega)}}$, where $C>0$ is a sufficiently large constant which will be specified later. We want to show that $\Omega$ can be decomposed into $l'$ convex subdomains $\{\Omega_i\}_{i=1}^{l'}$ such that $\diam (\Omega_i)\lesssim R$ and $l'\leq l$. If this holds, then using Buser's theorem (Theorem \ref{Buserthm}) and the Payne-Weinberger inequality (Theorem \ref{Diamthm}) we obtain
    \begin{align*}
        \mu_l(\Omega)\geq \mu_{l'}(\Omega)\geq \min_i\mu_1(\Omega_i)\gtrsim \frac{1}{R^2} = \frac{l^2\mu_k(\Omega)}{C^2k^2},
    \end{align*}which completes the proof of the theorem.

    We first consider the case where $a\leq 2R$. In this case Kr\"{o}ger's inequality (Theorem \ref{Kr ineq}) gives 
    \begin{align*}
        R\gtrsim \frac{Ck}{l \frac{k}{\diam \Omega}}=\frac{C\diam \Omega}{l}\geq \frac{Cb}{l}.
    \end{align*}We can take an absolute constant $C_1$ such that if $C\geq C_1$, then we have $R\geq 2b/l$. We thereby have a partition 
    \begin{align*}
        [-b,b]=[b_1,b_2]\cup[b_2,b_3]\cup \cdots\cup [b_{l'},b_{l'+1}],
    \end{align*}such that $l'\leq l$ and $b_{i+1}-b_i\leq R$. Thus setting $\Omega_i:= [-a,a]\times [b_i,b_{i+1}]$ we get the desired convex partition of $\Omega$; $\Omega=\bigcup_{i=1}^{l'}\Omega_i$, where $\diam (\Omega_i)\lesssim R$.

    Let us consider the case where $a>2R$. We put 
    \begin{align*}
        \Omega':=\{x\in \Omega \mid d(x,\partial \Omega)\geq R\},
    \end{align*}where 
    \begin{align*}
        d(x,\partial \Omega):=\inf \{|x-y| \mid y\in \partial \Omega\}.
    \end{align*}
    Note that $\Omega'$ is a nonempty (noncollapsed) rectangular region. Let $\{x_i\}_{i=1}^{l'}$ be a maximal set of $2R$-separated points in $\Omega'$. Since $\{B(x_i,R)\}_{i=1}^{l'}$ are disjoint balls in $\Omega$, we have
    \begin{align*}
       \frac{l'\omega_2(Ck)^2}{l^2\mu_k(\Omega)}= l'\omega_2 R^2 =\sum_{i=1}^{l'} \vol B(x_i,R)\leq \vol \Omega.
    \end{align*}
    Applying Theorem \ref{Kr ineq2} to the above inequality we have
    \begin{align*}
        l'\lesssim \frac{l}{k}\cdot \frac{l}{C^2}\leq \frac{l}{C^2}.
    \end{align*}
    There exists an absolute constant $C_2>0$ such that if $C\geq C_2$ we then obtain $l'\leq l$. 

    The maximality of the points $\{x_i\}_{i=1}^{l'}$ gives $\Omega'\subseteq \bigcup_{i=1}^{l'}B(x_i,2R)$. Also observe that the $\sqrt{2}R$-neighbourhood of $\Omega'$ covers $\Omega$ (note that the $R$-neighbourhood of $\Omega'$ does not cover $\Omega$). We thereby have $\Omega\subseteq \bigcup_{i=1}^{l'}B(x_i,(2+\sqrt{2})R)$. Therefore if we introduce the \emph{Voronoi cells}
    \begin{align*}
        \Omega_i:=\{x\in \Omega \mid |x-x_i|\leq |x-x_j| \text{ for all }j\neq i\},
    \end{align*}then $\{\Omega_i\}_{i=1}^{l'}$ is a convex partition of $\Omega$. Since $\Omega_i\subseteq B(x_i,(2+\sqrt{2})R)$ we obtain the desired convex partition $\{\Omega_i\}_{i=1}^{l'}$ of $\Omega$. 
    
    Taking $C$ as the maximum of $C_1$ and $C_2$ completes the proof of the theorem.
\end{proof}
The same method in the above proof applies to the proof of Theorem \ref{MTHM2} as follows:
\begin{proof}[Proof of Theorem \ref{MTHM2}]
Again we may assume that our domain $\Omega$ is rectangular, say $\Omega=[-a,a]\times [-b,b]$ and $a\leq b$. 

Put $R:=\frac{C\sqrt{l}}{\sqrt{k\mu_l(\Omega)}}$, where $C>0$ is a large constant which will be specified later. Using the same reasoning as in the proof of Theorem \ref{MTHM} it suffices to prove the existence of a convex partition $\{\Omega_i\}_{i=1}^{k'}$ of $\Omega$ such that $k'\leq k$ and $\diam (\Omega_i)\lesssim R$ for each $i$. 

The proof of the existence is divided into two cases. The first case is the case where $a\leq 2R$. In this case by Kr\"{o}ger's inequality (Theorem \ref{Kr ineq}) we have
\begin{align*}
    R\gtrsim \frac{C \sqrt{l}}{\sqrt{k \frac{l^2}{(\diam \Omega)^2}}}\geq \frac{Cb}{\sqrt{l k}} \geq \frac{Cb}{k}.
\end{align*}There exists an absolute constant $C_1>0$ such that if $C\geq C_1$ then we get $R\geq \frac{2b}{k}$. Therefore the same argument as in the Theorem \ref{MTHM} shows the existence of the desired convex partition. 

The second case is the case where $a>2R$. In this case as in the proof of Theorem \ref{MTHM} we consider the rectangular subdomain
\begin{align*}
    \Omega':=\{x\in \Omega \mid d(x,\partial\Omega)\geq R\}
\end{align*}and take a maximal set of $2R$-separated points $\{x_i\}_{i=1}^{k'}$ in $\Omega'$. If we show $k'\leq k$ then as in the proof of Theorem \ref{MTHM} we get the desired convex partition $\{\Omega_i\}_{i=1}^{k'}$. Since $\{B(x_i,R)\}_{i=1}^{k'}$ are disjoint balls in $\Omega$, we have
\begin{align*}
k'\omega_2R^2=\sum_{i=1}^{k'}\vol B(x_i,R)\leq \vol \Omega.   
\end{align*}Applying Kr\"{o}ger's inequality (Theorem \ref{Kr ineq2}) to the above inequality we obtain
\begin{align*}
    k'\leq \frac{\vol \Omega}{w_2 R^2}=\frac{k\mu_l(\Omega) \vol \Omega}{C^2\omega_2 l}\lesssim \frac{k}{C^2}.
\end{align*}We can take an absolute constant $C_2>0$ such that if $C\geq C_2$ we then obtain $k'\leq k$.

Setting $C:=\max\{C_1,C_2\}$ completes the proof.
\end{proof}
\section{An weaker inequality of (\ref{univ})}
In this section we give a short proof of the following:
\begin{thm}\label{Weaker ineq}Let $\Omega$ be a bounded convex domain in $\mathbb{R}^n$. Then we have
\begin{align*}
    \mu_{k+1}(\Omega)\lesssim n^7\mu_k(\Omega)
\end{align*}for any natural number $k$.
\begin{proof}Proposition \ref{Hacher lem} gives the existence of a rectangular domain $R=[-a_1,a_1]\times[-a_2,a_2]\times \cdots \times [-a_n,a_n]$ such that
      \begin{align*}
     \frac{1}{\sqrt{n}}R\subseteq  \Omega\subseteq nR.
    \end{align*}We claim that $\mu_{k+1}(R)\lesssim \mu_k(R)$. If this claim holds, applying Theorem \ref{domain monotonicity} twice, we then obtain
    \begin{align*}
        \mu_{k+1}(\Omega)\lesssim n^2\mu_{k+1}\Big(\frac{1}{\sqrt{n}}R\Big)=n^3\mu_{k+1}(R)\lesssim n^3\mu_k(R)=\ &n^5\mu_k(n R)\\ \lesssim \ & n^7\mu_k(\Omega),
    \end{align*}which implies the conclusion of the theorem.

    To prove the claim, let $\widetilde{R}$ be the flat torus whose fundamental domain is $R$. Recall that Neumann eigenvalues of the Laplacian on $R$ are of the form
    \begin{align*}
        \frac{\pi^2}{4}\Big\{\frac{k_1^2}{a_1^2}+\frac{k_2^2}{a_2^2}+\cdots + \frac{k_n^2}{a_n^2}\Big\}, \ k_1,k_2,\cdots, k_n\in \mathbb{N}\cup \{0\}
    \end{align*}and eigenvalues $\lambda_k(\widetilde{R})$ of the Laplacian on $\widetilde{R}$ are of the form
    \begin{align*}
        4\pi^2\Big\{\frac{l_1^2}{a_1^2}+\frac{l_2^2}{a_2^2}+\cdots + \frac{l_n^2}{a_n^2}\Big\}, \ l_1,l_2,\cdots, l_n\in \mathbb{Z}
    \end{align*}(\cite[Section II.2]{Ch}). Suppose that 
    \begin{align*}
        \mu_k(R)=\frac{\pi^2}{4}\Big\{\frac{m_1^2}{a_1^2}+\cdots +\frac{m_n^2}{a_n^2}\Big\} \text{ and }\mu_{k+1}(R)= \frac{\pi^2}{4}\Big\{\frac{m_1'^2}{a_1^2}+\cdots +\frac{m_n'^2}{a_n^2}\Big\}
    \end{align*}for some $m_1,\cdots,m_n,m_1',\cdots,m_n'\in \mathbb{N}\cup \{0\}$. Then we can take $l$ such that 
    \begin{align*}
        \lambda_l(\widetilde{R})=4\pi^2\Big\{\frac{m_1^2}{a_1^2}+\cdots +\frac{m_n^2}{a_n^2}\Big\} \text{ and }\lambda_{l+1}(\widetilde{R})= 4\pi^2\Big\{\frac{m_1'^2}{a_1^2}+\cdots +\frac{m_n'^2}{a_n^2}\Big\}.
    \end{align*}Since $\widetilde{R}$ is a homogeneous manifold, we use P.~Li's theorem (\cite[Theorem 11]{Li}) to get $\lambda_{l+1}(\widetilde{R})\lesssim \lambda_l(\widetilde{R})$, which implies $\mu_{k+1}(R)/\mu_k(R)=\lambda_{l+1}(\widetilde{R})/\lambda_l(\widetilde{R})\lesssim 1$. This completes the proof.
\end{proof}
\end{thm}
\emph{Acknowledgement.} The author would like to thank Arataka Funano and Chizu Funano for their massive support. The author also thanks the anonymous referee for his or her suggestion. This work was supported by JSPS KAKENHI Grant Number JP24K06731.


\end{document}